\documentclass[a4paper,oneside,reqno]{amsart}
\usepackage{geometry}
\usepackage{fancyhdr,mathrsfs,amssymb,amsmath,amsthm,latexsym}
\usepackage{mathtools}
\usepackage{enumerate,enumitem,csquotes,verbatim}
\usepackage{setspace}
\usepackage[numbers]{natbib}
\usepackage{hyperref}
\usepackage{youngtab}
\usepackage{ytableau}
\usepackage{color}
\usepackage{tikz}
\usepackage[table]{xcolor} 
\usepackage{soul}
\usepackage{multirow}
\usepackage{url}
\usepackage{ulem}
\usepackage{graphicx}
\usepackage{subcaption}
\usepackage{tabularx}

\usepackage{array}
\newcolumntype{Y}{>{\centering\arraybackslash}X}

\usepackage{ulem} 
\theoremstyle{plain}
\newtheorem{theorem}{Theorem}
\newtheorem{lemma}[theorem]{Lemma}

\newtheorem{corollary}[theorem]{Corollary}
\newtheorem{conjecture}[theorem]{Conjecture}

\theoremstyle{definition}

\theoremstyle{remark}

\newcommand{\A}{\mathcal{A}}
\newcommand{\B}{\mathcal{B}}
\newcommand{\C}{\mathcal{C}}
\newcommand{\D}{\mathcal{D}}
\newcommand{\E}{\mathcal{E}}

\title[]{Congruences via Partitions with Exactly Two Part Sizes}

\author{Sittinon Jirattikansakul}
\address{Department of Mathematics and Computer Science,
	Faculty of Science, Chulalongkorn University,
	Bangkok 10330, Thailand, and Centre of Excellence in Mathematics,
	Ministry of Higher Education, Science, Research and Innovation, Thailand}
\email{sittinon.j@chula.ac.th}

\author{Teeradej Kittipassorn}
\address{Department of Mathematics and Computer Science,
	Faculty of Science, Chulalongkorn University,
	Bangkok 10330, Thailand, and Centre of Excellence in Mathematics,
	Ministry of Higher Education, Science, Research and Innovation, Thailand}
\email{teeradej.k@chula.ac.th}

\author{Kraiwich Kongsiri}
\address{Department of Mathematics and Computer Science,
	Faculty of Science, Chulalongkorn University,
	Bangkok 10330, Thailand}
\email{6634303123@student.chula.ac.th}

\author{Nitipon Moonwichit}
\address{Institute of Mathematical Sciences,
	Claremont Graduate University,
	Claremont, CA, USA}
\email{nitipon.moonwichit@cgu.edu}

\author{Kirati Sriamorn}
\address{Department of Mathematics and Computer Science,
	Faculty of Science, Chulalongkorn University,
	Bangkok 10330, Thailand}
\email{kirati.s@chula.ac.th}

\begin{document}
	
	\begin{abstract}
		We prove the congruence $\sum_{1 \leq k < \sqrt{N}} \sigma_0 (N - k^2) \equiv 0 \pmod 4$, where $\sigma_0(m)$ denotes the number of positive divisors of $m$, for $N = An + B$ with $(A,B) \in \{ (16,14),$ $(36,30),$ $(72,42),$ $(196,70),$ $(252,114) \}$. Our proof relies on a result of Keith which states that $\nu_2 (N) \equiv 0 \pmod 4$, where $\nu_2(N)$ is the number of partitions of $N$ with exactly two part sizes. Inspired by Dewitt and Keith, our approach combines combinatorial arguments with modular arithmetic techniques. 
	\end{abstract}
	
	\maketitle


	\section{Introduction}

    Integer partitions play a central role in number theory and combinatorics. Since the foundational work of Euler~\cite{euler1797introductio}, partitions have been studied from a variety of perspectives, including generating functions by Andrews~\cite{andrews1998theory}, asymptotic analysis by Hardy and Ramanujan~\cite{hardy1918asymptotic}, and geometric interpretations via Ferrers and Young diagrams by MacMahon~\cite{macmahon1915combinatory}, among many others. 
	
	Define $\nu_k(N)$ to be the {\em number of partitions of $N$ in which exactly $k$ sizes of part appear}. For example, $\nu_2(4)=2$ since all partitions of 4 with exactly two part sizes are $(3,1)$ and $(2,1,1)$, and $\nu_3(4)=0$ since no partition of 4 has exactly three part sizes.
	
	This counting function has been studied by MacMahon~\cite{macmahon1921divisors}, Andrews~\cite{geandrew1999lattice}, and Tani and Bouroubi~\cite{tani2011enumeration} where the first two independently derived a compact formula for the case $k = 2$ as 
    \begin{align*}
        \nu_2(N) = \frac{1}{2}\left( \sum_{k=1}^{N-1} \sigma_0(k)\sigma_0(N-k) - \sigma_1(N) + \sigma_0(N) \right)
    \end{align*}
    where $\sigma_0(N)$ is the {\em number of positive divisors of $N$} and $\sigma_1(N)$ is the {\em sum of the positive divisors of $N$}. Keith~\cite{keith2017partitions} then later proved that $\nu_2(An+B)\equiv 0\pmod 4$ when $(A,B)$ is one of (16,14), (36,30), (72,42), (196,70) and (252,114). This inspires us to revisit several results of Keith~\cite{dewitt2025combinatorial} using a finer combinatorial approach based on multiset partition. In particular, we study configurations of Young diagrams of partitions of $N$ with exactly two part sizes in the arithmetic sequences considered by Keith~\cite{keith2017partitions}, and obtain the following congruence.
    
	
	\begin{theorem}\label{thm:sat}
		Let $(A,B)\in \{ (16,14), (36,30), (72,42), (196,70), (252,114) \}$ and $N = An+B$ for some nonnegative integer $n$. Then
		$$\sum\limits_{1\leq k<\sqrt{N}} \sigma_0(N-k^2) \equiv 0 \pmod4.$$
	\end{theorem}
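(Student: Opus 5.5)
The plan is to get the congruence out of Keith's result $\nu_2(N)\equiv 0\pmod 4$ by counting the same objects twice. The objects are quadruples of positive integers $(a,i,b,j)$ with $ai+bj=N$; such a quadruple corresponds to the multiset $a^i b^j$. Let $T(N)$ be the number of these quadruples, so $T(N)=\sum_{m=1}^{N-1}\sigma_0(m)\sigma_0(N-m)$. The quadruples with $a=b$ satisfy $a(i+j)=N$, and there are $\sum_{d\mid N}(N/d-1)=\sigma_1(N)-\sigma_0(N)$ of them. Every partition with exactly two part sizes is counted twice among the quadruples with $a\ne b$, once for each order of the two blocks. Hence
\[
2\nu_2(N)=T(N)-\sigma_1(N)+\sigma_0(N),
\]
which is MacMahon's formula. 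Keith's theorem then says that $T(N)-\sigma_1(N)+\sigma_0(N)\equiv 0\pmod 8$ on the five progressions. So it suffices to compute $T(N)$ modulo $8$ in terms of $S(N):=\sum_{1\le k<\sqrt N}\sigma_0(N-k^2)$ and a few simple arithmetic functions of $N$.

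To do this I would let the group $G=(\mathbb Z/2)^3$ act on the quadruples. The generators are three involutions:
\begin{itemize}
\item $\alpha\colon(a,i,b,j)\mapsto(i,a,b,j)$, which conjugates the first block;
\item $\beta\colon(a,i,b,j)\mapsto(a,i,j,b)$, which conjugates the second block;
\item $\gamma\colon(a,i,b,j)\mapsto(b,j,a,i)$, which swaps the two blocks.
\end{itemize}
Modulo $8$, $T(N)$ is determined by the orbits of size $1$, $2$ and $4$. Each of these is described by a fixed-point condition.
\begin{itemize}
\item The quadruples fixed by $\alpha$ have $a=i=k$, and they are counted by exactly $S(N)$. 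The same holds for $\beta$.
\item The quadruples fixed by $\gamma$ have $a=b$, $i=j$ and $2ai=N$, so there are $\sigma_0(N/2)$ of them.
\item The quadruples fixed by $\alpha\beta$ have $N=k^2+l^2$, i.e.\ $N$ is a sum of two squares.
\item The quadruples fixed by $\alpha\gamma$ or $\beta\gamma$ satisfy relations such as $a=j$, $i=b$, giving $N=2ai$ again, with further coincidences when $\gamma$ is combined with both conjugations.
\end{itemize}
Counting orbits by their stabilizers (a Burnside-type computation, done carefully modulo $8$) should give a congruence of the form
\[
T(N)\equiv 4S(N)+c_1\,\sigma_0(N/2)+c_2\,r(N)+\cdots\pmod 8,
\]
where $r(N)$ counts the representations $N=k^2+l^2$ and the remaining terms come from the degenerate quadruples with $N=2k^2$ and similar.

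Next I would use the shape of the five progressions to kill every correction term. In each case $N=An+B$ with $B\equiv 2\pmod 4$, so $N=2M$ with $M$ odd.
\begin{itemize}
\item The progressions are chosen so that $N$ is not a sum of two squares. For example, $N\equiv 14\pmod{16}$ gives $M\equiv 7\pmod 8$, so $r(N)=0$.
\item For the same reason $N\ne 2k^2$.
\item The moduli $9$, $49$, $7$ in $36,196,252$ should force some prime $p\equiv 3\pmod 4$ to divide $M$ to an odd power. This should control both $r(N)$ and the parities of $\sigma_0(N)$, $\sigma_0(N/2)$ and $\sigma_1(N)$ modulo $8$.
\end{itemize}
Since $\sigma_1(N)=3\sigma_1(M)$ and $\sigma_0(N)=2\sigma_0(M)$, I need $\sigma_1(M)$ modulo $8$ and $\sigma_0(M)$ modulo $4$. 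Both should follow from $M$ being a non-square (indeed a non-sum-of-two-squares) with a suitable residue, by pairing each divisor $d$ with $M/d$.

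Combining these with $2\nu_2(N)\equiv 0\pmod 8$ will leave $4S(N)\equiv 0\pmod 8$ only if the argument is done modulo $16$. So I will actually need the orbit count one level finer: in $2\nu_2=T-\sigma_1+\sigma_0$, the $\alpha$-fixed and $\beta$-fixed terms contribute $2S(N)$ modulo $4$ after the identifications. Equivalently, I would run the argument directly on the partitions counted by $\nu_2$, with an action of $(\mathbb Z/2)^2$ given by conjugating the larger-part block and the smaller-part block (via the unordered-pair structure). Then $\nu_2(N)\equiv 2S(N)+(\text{corrections})\pmod 4$, and with the corrections killed this gives $S(N)\equiv 0\pmod 2$.

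The main obstacle is getting the constant in front of $S(N)$ right, so that the conclusion is modulo $4$ rather than only modulo $2$. This requires handling, without double counting, the orbits where conjugating one block changes which block has the larger part, and the fixed points where $a=i$ and $b=j$ hold simultaneously. I would settle this by a careful stabilizer table before doing any residue analysis.
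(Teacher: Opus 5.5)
There is a genuine gap, and it is the one you flagged yourself: the coefficient of $S(N)$. As written, your plan ends with ``$\nu_2(N)\equiv 2S(N)+(\text{corrections})\pmod 4$ \dots\ this gives $S(N)\equiv 0\pmod 2$''. That is not the theorem, and it carries no information. Since $N$ is not a sum of two squares, no $N-k^2$ is a square, so every $\sigma_0(N-k^2)$ is even and $S(N)$ is automatically even. Hence $4S(N)\equiv 0\pmod 8$ and $2S(N)\equiv 0\pmod 4$. Neither your predicted $T(N)\equiv 4S(N)+\cdots\pmod 8$ nor its ``finer'' replacement can detect $S(N)\bmod 4$.

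The fallback of letting $(\mathbb Z/2)^2$ act on the partitions counted by $\nu_2$ is not a well-defined action either. Conjugating one block of $a^ib^j$ can produce a partition with one part size (when $i=b$), or can change which block carries the larger part, so the maps are not involutions of that set. The paper avoids this by working with the multiset $\mathcal{A}$, which also contains the equal-width diagrams (the part $\mathcal{D}$).

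Your stabilizer table is also off. Since $\gamma\alpha\gamma=\beta$, the three involutions generate the dihedral group of order $8$, not $(\mathbb Z/2)^3$. The elements $\alpha\gamma$ and $\beta\gamma$ have order $4$ and fix only quadruples with $a=i=b=j$. The relations $a=j$, $i=b$ that you attribute to them describe the fixed points of $\alpha\beta\gamma$.

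Done correctly, your route closes without any mod-$16$ refinement. The orbits of this dihedral group on quadruples are exactly the paper's canonical pairs; let $P$ be their number. Take $N=2M$ with $M$ odd and $N$ not a sum of two squares. Then:
\begin{itemize}
\item $\alpha$ and $\beta$ each fix $S(N)$ quadruples;
\item $\gamma$ and $\alpha\beta\gamma$ each fix $\sigma_0(N/2)$ quadruples, namely $(a,i,a,i)$ and $(a,i,i,a)$ with $2ai=N$;
\item $\alpha\beta$, $\alpha\gamma$, $\beta\gamma$ fix nothing, since they force $N=k^2+l^2$ or $N=2a^2$.
\end{itemize}
Burnside gives
\[
8P=T(N)+2S(N)+2\sigma_0(N/2).
\]
Equivalently, each $\alpha$-fixed quadruple lies in an orbit of size $4$ containing two $\alpha$-fixed and two $\beta$-fixed points, so these orbits contribute $2S(N)$, not $4S(N)$, to $T(N)$.

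Substituting into MacMahon's formula and using $\sigma_0(N)=2\sigma_0(N/2)$ gives $\nu_2(N)+S(N)+\tfrac12\sigma_1(N)=4P$. This is precisely Theorem~\ref{doublecount}; the paper obtains it by directly counting the four gluings per canonical pair. The coefficient of $S(N)$ is $1$, and the $\sigma_0$ terms cancel exactly, so nothing about $\sigma_0(M)\bmod 4$ is needed. Keith's $\nu_2(N)\equiv 0\pmod 4$ then reduces the theorem to $8\mid\sigma_1(N)$.

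For that last step, your pairing $d\leftrightarrow M/d$ works when $M\equiv 7\pmod 8$, since each pair sums to $0\bmod 8$. For the other four progressions you need the paper's argument:
\begin{itemize}
\item when $3\mid N$ and $9\nmid N$, the factor $\sigma_1(3)=4$ appears, and the odd non-square cofactor $N/6$ supplies a further factor of $2$;
\item when $7\mid N$ and $49\nmid N$, the factor $\sigma_1(7)=8$ appears.
\end{itemize}
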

	The sum $\sum_{1 \leq k < \sqrt{N}} \sigma_0(N - k^2)$ in Theorem~\ref{thm:sat} and its asymptotic behavior were investigated by Hooley~\cite{hooley1958representation} in his study of representations of $N$ as the sum of a square and a product where the number of such representations is equal to $\sum_{-\sqrt{N} < k < \sqrt{N}} \sigma_0 (N - k^2)$. 
	
	The following corollaries are consequences of Theorem~\ref{thm:sat}. 


	\begin{corollary}
		\label{corolldiv2}
		Let $N$ be a positive integer such that $N\equiv14\pmod{16}$ or $N\equiv70\pmod{196}$. Then the number of positive odd integers $k$ such that $k^2<N$ and $\sigma_0 (N - k^2) \equiv 2 \pmod 4$ is always even. 
	\end{corollary}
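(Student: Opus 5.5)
The plan is to read the corollary off Theorem~\ref{thm:sat} by reducing every summand modulo $4$ and showing that the only terms that can be $\not\equiv 0 \pmod 4$ are those with $k$ odd, and that those terms are even. Two elementary facts about divisor counts are used throughout:
\begin{itemize}[label={}, leftmargin=0pt]
\item[(i)] $\sigma_0(m)$ is odd if and only if $m$ is a perfect square;
\item[(ii)] if $m=2m'$ with $m'$ odd, then $\sigma_0(m)=2\sigma_0(m')$, so $\sigma_0(m)\equiv 2\pmod 4$ exactly when $m'$ is a perfect square.
\end{itemize}
In both families $N\equiv 2\pmod 4$, since $14\equiv 70\equiv 2$ and $16\equiv 196\equiv 0 \pmod 4$. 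Both families are also covered by Theorem~\ref{thm:sat}, so $\sum_{1\le k<\sqrt N}\sigma_0(N-k^2)\equiv 0\pmod 4$.

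First I will show that no summand is odd. By (i) this amounts to showing that $N-k^2$ is never a square, i.e.\ that $N$ is not a sum of two squares. For $N\equiv 14\pmod{16}$ we have $N/2\equiv 7\pmod 8$. This odd number is $\equiv 3\pmod 4$, so some prime $p\equiv 3\pmod 4$ divides it to an odd power, and hence divides $N$ to an odd power. For $N\equiv 70\pmod{196}$ we have $N\equiv 21\pmod{49}$, so $7\parallel N$. In either case the sum-of-two-squares theorem rules out $N=k^2+j^2$.

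Next I will show that every even-$k$ summand is $\equiv 0\pmod 4$. For even $k$ we have $N-k^2\equiv 2\pmod 4$, so $N-k^2=2m'$ with $m'$ odd. By (ii) the summand is $\equiv 2 \pmod 4$ exactly when $N=k^2+2j^2$ with $k$ even, so it suffices to rule out such representations.
\begin{itemize}[label={}, leftmargin=0pt]
\item[(a)] For $N\equiv 14\pmod{16}$, write $k=2a$. Then $N=k^2+2j^2$ becomes $2a^2+j^2=N/2\equiv 7\pmod 8$. This forces $j$ odd, hence $j^2\equiv 1\pmod 8$, so $2a^2\equiv 6\pmod 8$ and $a^2\equiv 3\pmod 4$, which is impossible.
\item[(b)] For $N\equiv 70\pmod{196}$, use that $-2\equiv 5$ is a quadratic nonresidue modulo $7$ (the residues are $1,2,4$). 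Hence $7\mid k^2+2j^2$ forces $7\mid k$ and $7\mid j$, so $49\mid N$. This contradicts $7\parallel N$.
\end{itemize}

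Finally, the even-$k$ terms contribute $0$ modulo $4$. Each odd-$k$ term is even by the first step, so it is $\equiv 0$ or $2\pmod 4$. Theorem~\ref{thm:sat} then gives $2\cdot\#\{k \text{ odd}: \sigma_0(N-k^2)\equiv 2 \pmod 4\}\equiv 0\pmod 4$, so this count is even.

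The main obstacle is the even-$k$ step, in particular ruling out $N=k^2+2j^2$. For the first family this needs the correct $2$-adic bookkeeping, and for the second it needs the right prime, namely $7$ with $-2$ a nonresidue. The remaining steps are routine parity checks.
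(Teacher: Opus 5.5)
Your proposal is correct and follows essentially the same route as the paper. You split by the parity of $k$. You show each even-$k$ term is $2\sigma_0(\text{odd non-square})\equiv 0 \pmod 4$ using the same mod-$8$ computation and the same mod-$7$ argument (with $-2$ a non-residue). You then note the odd-$k$ terms are even because $N$ is not a sum of two squares, and conclude from Theorem~\ref{thm:sat}. The only difference is that you justify ``$N$ is not a sum of two squares'' via the sum-of-two-squares theorem instead of the paper's residue check modulo $A$, which is a perfectly fine substitute.
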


	\begin{corollary}
		\label{corolldiv3}
		Let $N$ be a positive integer such that $N\equiv B\pmod{A}$ where $(A,B)$ is one of $(36,30), (72,42),$ and $(252,114)$. Then $$\sum\limits_{\substack{1\leq k<\sqrt{N}\\3\nmid k}} \sigma_0(N-k^2) \equiv 0 \pmod4.$$
	\end{corollary}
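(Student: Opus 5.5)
The plan is to deduce Corollary~\ref{corolldiv3} from Theorem~\ref{thm:sat}. Each of the classes $(36,30)$, $(72,42)$, $(252,114)$ is among those covered by the theorem, so the full sum $\sum_{1\le k<\sqrt N}\sigma_0(N-k^2)$ is $\equiv 0 \pmod 4$. It therefore suffices to show that the complementary sum over $k$ divisible by $3$ is also $\equiv 0\pmod 4$. In fact I expect to show that every single term of it is divisible by $4$. So write $k=3j$ with $1\le j<\sqrt{N}/3$, and study $M_j:=N-9j^2$.

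\textbf{Step 1 (the prime $3$).} In all three cases $N\equiv 3$ or $6\pmod 9$: indeed $30\equiv 3$, $42\equiv 6$ and $114\equiv 6 \pmod 9$, and $9$ divides each of $36,72,252$. Hence $M_j\equiv N\pmod 9$, so $3$ exactly divides $M_j$. Writing $M_j=3m_j$ with $3\nmid m_j$, multiplicativity gives $\sigma_0(M_j)=2\sigma_0(m_j)$. It therefore remains to show that $\sigma_0(m_j)$ is even, i.e.\ that $m_j$ is not a perfect square, or else to find an extra factor of $2$ elsewhere.

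\textbf{Step 2 (the prime $2$).} In all three cases $N\equiv 2\pmod 4$, because $4$ divides $A$ and $B\equiv 2 \pmod 4$.
\begin{itemize}
\item If $j$ is even, then $M_j\equiv 2\pmod 4$, so $2$ exactly divides $M_j$. Together with Step 1, $\sigma_0(M_j)=2\cdot 2\cdot\sigma_0(M_j/6)\equiv 0\pmod 4$.
\item If $j$ is odd, suppose $m_j=r^2$. Then $L:=N/3=3j^2+r^2$. Here $L\equiv 2\pmod 4$, since $L\equiv 10\pmod{12}$, $14\pmod{24}$ and $38\pmod{84}$ respectively. With $j$ odd we have $3j^2\equiv 3\pmod 8$. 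Then $r^2+3\equiv 2\pmod 4$ forces $r$ odd, and then $r^2+3j^2\equiv 4\pmod 8$, contradicting $L\equiv 2\pmod 4$.
\end{itemize}
So in the odd case $m_j$ is never a square, $\sigma_0(m_j)$ is even, and $\sigma_0(M_j)\equiv 0\pmod 4$.

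\textbf{Conclusion.} Every term with $3\mid k$ is $\equiv 0\pmod 4$. Subtracting this sum from the full sum in Theorem~\ref{thm:sat} yields the corollary. The only delicate point is the odd-$j$ case, where one has to rule out $m_j$ being a square. I expect the mod-$8$ obstruction above to handle it, but it should be double-checked against each residue class of $L$.
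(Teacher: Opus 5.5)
Your proof is correct and is essentially the paper's argument: you use Theorem~\ref{thm:sat} for the full sum, note that $3\,\|\,N-9j^2$ so $\sigma_0(N-9j^2)=2\sigma_0(m_j)$, and rule out $m_j=r^2$ because $N/3=3j^2+r^2\equiv 2\pmod 4$ is impossible. The paper does this last step uniformly by observing that $3j^2+r^2 \bmod 4\in\{0,1,3\}$, so your split on the parity of $j$ is unnecessary. Also, in your odd case the congruence $r^2+3\equiv 2\pmod 4$ has no solution at all, which is already the contradiction, rather than something that ``forces $r$ odd''.
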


To prove Theorem~\ref{thm:sat}, we first prove an auxiliary congruence relation in Theorem~\ref{doublecount} by assuming a weaker assumption on the natural number $N$.
    
	\begin{theorem}
		\label{doublecount}
		Let $N$ be a natural number satisfying
		\begin{itemize}
			\item $N$ cannot be written as a sum of two squares, and 
			\item $N=2m$ for some odd integer $m$. 
		\end{itemize}Then
		$$\nu_2(N)+\sum\limits_{1\leq k<\sqrt{N}} \sigma_0(N-k^2)+\frac{1}{2}\sigma_1(N) \equiv 0 \pmod4.$$
	\end{theorem}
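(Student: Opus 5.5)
The plan is to bypass partitions entirely and work from the MacMahon--Andrews formula quoted in the introduction,
$$2\nu_2(N)=\sum_{k=1}^{N-1}\sigma_0(k)\sigma_0(N-k)-\sigma_1(N)+\sigma_0(N).$$
Write $S=\sum_{1\le k<\sqrt N}\sigma_0(N-k^2)$ and $T=\sum_{k=1}^{N-1}\sigma_0(k)\sigma_0(N-k)$. Multiplying the target congruence by $2$ and substituting this formula for $2\nu_2(N)$, the $\sigma_1(N)$ terms cancel. So the statement is equivalent to
$$T+\sigma_0(N)+2S\equiv 0 \pmod 8.$$
Since $N=2m$ with $m$ odd, we have $\sigma_0(N)=2\sigma_0(m)$. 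It therefore suffices to show $T\equiv 2S+2\sigma_0(m)\pmod 8$ together with the fact that $S+\sigma_0(m)$ is even, because the total is then $4(S+\sigma_0(m))\equiv 0\pmod 8$.

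To compute $T$ modulo $8$, interpret it as the number of quadruples $(a,b,c,d)$ of positive integers with $ab+cd=N$. The dihedral group $G$ of order $8$ acts on this set, generated by $s_1:(a,b)\mapsto(b,a)$, $s_2:(c,d)\mapsto(d,c)$ and the pair swap $t:(a,b,c,d)\mapsto(c,d,a,b)$. Orbits of size $8$ contribute $0$ modulo $8$, so I only need to classify quadruples with nontrivial stabilizer, using the two hypotheses.

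\begin{itemize}
\item Since $N$ is not a sum of two squares, we never have $a=b$ and $c=d$ simultaneously.
\item Since $m$ is odd, $N\ne 2a^2$, because $2a^2=a^2+a^2$ is a sum of two squares. Equivalently, $m$ is not a square.
\end{itemize}

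The reflections of $G$ are $s_1,s_2,t,t s_1 s_2$, and a nontrivial stabilizer contains at least one of them.

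First, the reflection $s_1$ fixes exactly the quadruples with $a=b$, i.e. $a^2+cd=N$; there are exactly $S$ of these. The conjugate reflection $s_2$ likewise fixes $S$ quadruples. Such a quadruple $(a,a,c,d)$ has orbit $\{(a,a,c,d),(a,a,d,c),(c,d,a,a),(d,c,a,a)\}$ of size $4$; these four are distinct because $c\neq d$, by the first hypothesis. Hence these orbits contribute $2S$ quadruples, and $S$ is even, since the $a=b$ members pair up as $(a,a,c,d)\leftrightarrow(a,a,d,c)$.

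Second, the reflections $t$ and $ts_1s_2$ fix the quadruples with $(c,d)=(a,b)$, respectively $(c,d)=(b,a)$. Either condition forces $ab=m$, giving orbits $\{(a,b,a,b),(b,a,b,a),(a,b,b,a),(b,a,a,b)\}$. These have size $4$ because $a\ne b$, as $m$ is not a square. They contain exactly the quadruples with $\{a,b\}=\{c,d\}$ and $ab=m$, which number $2\sigma_0(m)$.

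These two families are disjoint, since a common member would have $a=b=c=d$. Also no stabilizer is larger: that would need two non-conjugate reflections, forcing $a=b=c=d$ again. Hence $T\equiv 2S+2\sigma_0(m)\pmod 8$. Moreover $\sigma_0(m)$ is even because $m$ is not a square, and $S$ is even as shown, which completes the argument.

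The main obstacle is the careful orbit bookkeeping. One must check that every element with nontrivial stabilizer falls into exactly one of the two families, and that each family's orbits have size exactly $4$ and not $2$ or $1$. This is precisely where both hypotheses on $N$ are used: non-representability as $x^2+y^2$ rules out $a=b,\,c=d$, and $N=2m$ with $m$ odd rules out $a=b$ in the pair-swap family. I would verify this by listing the five conjugacy classes of the dihedral group and checking the fixed set of each rotation and reflection directly.
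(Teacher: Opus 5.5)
Your proof is correct, but it takes a different route from the paper.

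\textbf{How the two proofs differ.} The paper never uses the MacMahon--Andrews formula. It forms the multiset $\mathcal{A}$ of four gluings per canonical pair, so $|\mathcal{A}|\equiv 0\pmod 4$ is automatic. The work then goes into two places:
\begin{itemize}
\item the multiplicity table that splits $\mathcal{A}$ into $\mathcal{B},\mathcal{C},\mathcal{D},\mathcal{E}$;
\item the evaluation $|\mathcal{D}|+|\mathcal{E}|=\sum_{d\mid N}\lfloor d/2\rfloor+\frac12\sigma_0(N/2)=\frac12\sigma_1(N)$, which is where oddness of $m$ enters.
\end{itemize}
You instead let the quoted formula absorb all the partition counting. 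That reduces the claim to $T+\sigma_0(N)+2S\equiv 0\pmod 8$, which is exactly the reformulation the paper mentions after the statement. After that you only need the fixed-point sets of the dihedral action on $\{(a,b,c,d): ab+cd=N\}$.

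\textbf{What they share.} The symmetry is the same in both proofs: $s_1$ and $s_2$ transpose the two rectangles, $t$ swaps them, and your $G$-orbits are exactly the paper's canonical pairs. If you track orbit sizes instead of residues, your count plus the formula recovers the paper's exact identity $\nu_2(N)+S+\frac12\sigma_1(N)=4\cdot(\text{number of canonical pairs})$.

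\textbf{What each buys.} Your version replaces the case table on Young diagrams with bookkeeping that is mechanical and easy to check, but it depends on the MacMahon--Andrews formula. The paper's version is self-contained.

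\textbf{Two small fixes.}
\begin{itemize}
\item That $m$ is not a square follows from $N$ not being a sum of two squares (since $2a^2=a^2+a^2$), not from $m$ being odd. Oddness of $m$ is used only in $\sigma_0(N)=2\sigma_0(m)$, and your closing paragraph repeats the misattribution.
\item The claim that every nontrivial stabilizer contains a reflection needs a one-line check. The element $s_1s_2$ fixes only quadruples with $a=b$ and $c=d$, and the $4$-cycles fix only $a=b=c=d$; both are excluded by hypothesis.
\end{itemize}
With that check in place you don't even need orbit sizes. The four reflection fixed sets are pairwise disjoint with total size $2S+2\sigma_0(m)$, and their complement is a union of free orbits.
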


    Note that according to the formula for $\nu_2(N)$ given in the introduction, 
    the congruence in Theorem~\ref{doublecount} can be written as 
    \begin{align*}
        \frac{1}{2} \left( \sigma_0(N) + \sum_{k=1}^{N-1} \sigma_0(k)\sigma_0(N-k) \right) + \sum_{1 \leq k < \sqrt{N}} \sigma_0 (N - k^2) \equiv 0 \pmod 4. 
    \end{align*}
    To the best of our knowledge, this congruence does not appear explicitly in the existing literature. 
	
	We organize the rest of the paper as follows. In Section~\ref{sec:bckgrnd}, we discuss background and terminology. Section~\ref{sec:thm:sat} is devoted to the proof of Theorem~\ref{thm:sat}, which serves as our main result. In Section~\ref{sec:otherresults}, we apply Theorem~\ref{thm:sat} and use some elementary number theory to prove Corollaries~\ref{corolldiv2} and~\ref{corolldiv3}. Finally, in Section~\ref{sec:conclude}, we conclude the paper with some remarks and a conjecture inspired by our findings.
	
	
	
	\section{Background}
	\label{sec:bckgrnd}
	
	
	Fix a natural number $n$. A {\em partition of $n$ with exactly one part size} is a tuple $\vec{v}=(n_1,n_1,\hdots, n_1)$ of $k$ copies of $n_1$ for some $k\geq 1$ satisfying $kn_1=n$.
	A {\em partition of $n$ with exactly two part sizes} is a tuple $\vec{v}=(n_1,n_1, \hdots, n_1,n_2,n_2, \hdots, n_2)$ where $n_1>n_2 \geq 1$ such that if $n_1$ appears $k_1$ times and $n_2$ appears $k_2$ times, then $k_1n_1+k_2n_2=n$. 
    In this paper, when we mention a {\em rectangle}, we mean a usual rectangular grid where each cell is a square. An {\em $m \times n$ rectangle} is a rectangle consisting of $m$ rows and $n$ columns. If $X$ is an $m \times n$ rectangle, denote by $X^T$ the $n \times m$ rectangle. 

	A {\em Young diagram for a partition} is the unique grid of cells where the cells are arranged in left-justified rows with the row lengths in non-increasing order. For each Young diagram consisting of $n$ cells in $k$ rows, we associate the diagram with the tuple $(n_1,n_2,\hdots, n_k)$ where $n_i$ is the number of cells lying in the $i$-th row. By this interpretation, each Young diagram in {\em rectangular shape} consisting of $n$ cells uniquely determines the partition of $n$ with exactly one part size. Likewise, each Young diagram in {\em L-shape} uniquely determines the partition of $n$ with exactly two part sizes. For example, Figure~\ref{young12}
is interpreted as the partition $(3,3,3,3)$ of $12$, and 
	Figure~\ref{young19} is interpreted as the partition $(5,5,5,2,2)$ of $19$.
	
	\begin{figure}
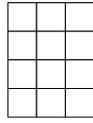
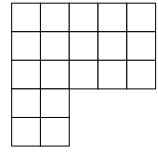

		\centering
		\begin{subfigure}{.45\linewidth}
			\centering
			\scriptsize\ydiagram[*(white)]{3,3,3,3}
			\caption{}
			\label{young12}
		\end{subfigure}
		\begin{subfigure}{.45\linewidth}
			\centering
			\scriptsize\ydiagram[*(white)]{5,5,5,2,2}
			\caption{}
			\label{young19}
		\end{subfigure}
		
		\caption{The Young diagrams for the partitions: (a) $(3,3,3,3)$ of $12$, and (b) $(5,5,5,2,2)$ of $19$}
	\end{figure}

   Any Young diagram corresponding to a partition with exactly two part sizes can be viewed as obtained by gluing two rectangles vertically, with the rectangle having the wider base placed on top. Conversely, every such L-shaped diagram can be decomposed in this way into a unique pair of rectangles. This observation motivates our approach and leads to a proof based on counting pairs of rectangles.

	\section{Proof of Theorem~\ref{thm:sat}}
	\label{sec:thm:sat}
    
    Let $N$ be a natural number that cannot be written as a sum of two squares. A {\em canonical pair for $N$} is an unordered pair of rectangles $\{X,Y\}$ such that the numbers of rows of $X$ and $Y$ are fewer than or equal to the numbers of columns of $X$ and $Y$, respectively, and the sum of numbers of cells of $X$ and $Y$ is $N$. Let $\A$ be the multiset consisting of 
    \[
    \{X,Y\}, \{X,Y^T\}, \{X^T,Y\}, \{X^T,Y^T\}
    \]
    for all canonical pairs $\{X,Y\}$ for $N$. We define subsets $\mathcal{B}, \mathcal{C}, \mathcal{D}$, and $\mathcal{E}$ of $\A$ as follows: 
    \begin{align*}
        \mathcal{B} &= \{\{X,Y\} \in \A \ | \ \text{the numbers of columns of $X$ and $Y$ are different}\}, \\
        \mathcal{C} &= \{\{X,Y\} \in \A \ | \ \text{$X$ or $Y$ is a square}\}, \\
        \mathcal{D} &= \{\{X,Y\} \in \A \ | \ \text{the numbers of columns of $X$ and $Y$ are the same}\}, \\
        \mathcal{E} &= \{\{X,Y\} \in \A \ | \ \text{$X = Y^T$}\}. 
    \end{align*}
    From the definitions above, there is a natural correspondence among the partitions with at most two part sizes, rectangular and L-shaped Young diagrams, and pairs of rectangles. As noted at the end of Section~\ref{sec:bckgrnd}, $\{X,Y\}$ corresponds to the Young diagram obtained by placing the rectangle with the wider base on top; in case where the bases are equal, either rectangle can be placed on top (resulting in the same diagram). This means $\A$ can be viewed as the multiset of Young diagrams obtained from all possible vertical gluings of two rectangles, where each rectangle may independently be replaced by its transpose. Consequently, the purpose of defining $\B, \C, \D$, and $\E$ is to classify those diagrams and count their multiplicities. We may interpret the sets as follows: 
    \begin{itemize}
        \item $\B$ is the set of all Young diagrams corresponding to the partitions of $N$ with exactly two part sizes, 
        \item $\C$ is the set of all Young diagrams corresponding to the partitions of $N$ that can be decomposed into a pair of rectangles $\{X,Y\}$ such that at least one of them is a square, 
        \item $\D$ is the multiset of Young diagrams with exactly one part size where we will later show how to count the multiplicities in Lemma~\ref{sizeBCDE}. 
        \item $\E$ is the set of all Young diagrams corresponding to the partitions of $N$ that can be decomposed into a pair of rectangles $\{X,Y\}$ such that $X = Y^T$. 
    \end{itemize}
    
    Denote by $\star$ the {\em gluing operation} and define $X \star Y$ to be the {\em multiset consisting of 
    \[
    \{X,Y\} , \{X,Y^T\} , \{X^T,Y\} , \{X^T,Y^T\}
    \]
    for any $\{X,Y\} \in \mathcal{A}$}. Here, $\{X,Y\}, \{X,Y^T\}, \{X^T,Y\}, \{X^T,Y^T\}$ can be viewed as the Young diagrams we mentioned above. See Figure~\ref{4adhesionsof4pairs} for an example. The figure demonstrates the four canonical pairs for $6$ together with their associated gluings. There are nine diagrams (with repetitions) in $\A$ corresponding to six elements in $\B$. Since $\B$ is a set, and among these nine diagrams there are three pairs of identical diagrams, each of these pairs is counted only once in $\B$. A similar argument applies to $\C, \D$, and $\E$. Therefore, at $N = 6$, 
    \[
    |\A| = 16 = 6 + 4 + 5 + 1 = |\B| + |\C| + |\D| + |\E|. 
    \]
    \begin{figure}
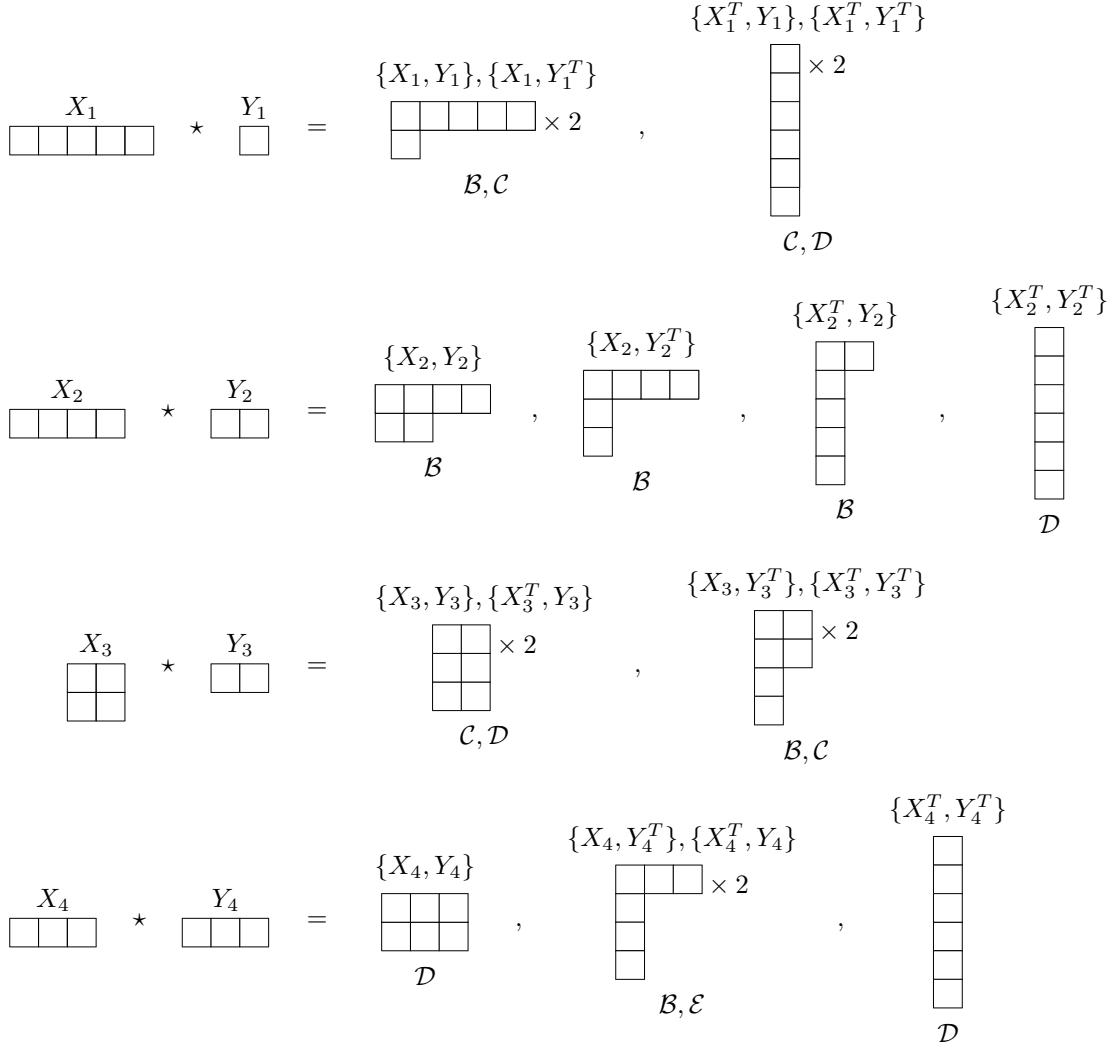

    \renewcommand{\arraystretch}{1.5}
		\begin{align*}
			{X_1 \atop \scriptsize\ydiagram{5}} \quad \star \quad {Y_1 \atop \scriptsize\ydiagram[*(white)]{1}} \quad &= \quad {\begin{array}{c} \{X_1,Y_1\}, \{X_1,Y_1^T\} \\ \scriptsize\ydiagram[*(white)]{5}*[*(white)]{5,1} \times 2 \\ {\mathcal{B},\mathcal{C}} \end{array}} \quad , \quad {\begin{array}{c} \{X_1^T,Y_1\},\{X_1^T,Y_1^T\} \\ \scriptsize\ydiagram[*(white)]{1,1,1,1,1}*[*(white)]{1,1,1,1,1,1} \times 2 \\ {\mathcal{C},\mathcal{D}} \end{array}} \\
			{X_2 \atop \scriptsize \ydiagram{4}} \quad \star \quad {Y_2 \atop \scriptsize\ydiagram[*(white)]{2}} \quad &= \quad {\begin{array}{c} \{X_2,Y_2\} \\ \scriptsize \ydiagram[*(white)]{4}*[*(white)]{4,2} \\ {\mathcal{B}} \end{array}} \quad , \quad {\begin{array}{c} \{X_2,Y_2^T\} \\ \scriptsize\ydiagram[*(white)]{4}*[*(white)]{4,1,1} \\ {\mathcal{B}}\end{array}} \quad , \quad {\begin{array}{c} \{X_2^T,Y_2\} \\ \scriptsize\ydiagram[*(white)]{0,1,1,1,1}*[*(white)]{2} \\ {\mathcal{B}}\end{array}} \quad , \quad {\begin{array}{c} \{X_2^T, Y_2^T\} \\ \scriptsize\ydiagram[*(white)]{1,1,1,1}*[*(white)]{0,0,0,0,1,1} \\ {\mathcal{D}}\end{array}} \\
			{X_3 \atop \scriptsize \ydiagram{2,2}} \quad \star \quad {Y_3 \atop \scriptsize\ydiagram[*(white)]{2}} \quad &= \quad {\begin{array}{c} \{X_3,Y_3\},\{X_3^T,Y_3\} \\ \scriptsize \ydiagram[*(white)]{2,2}*[*(white)]{0,0,2}  \times 2 \\ {\mathcal{C},\mathcal{D}}\end{array}} \quad , \quad {\begin{array}{c} \{X_3,Y_3^T\},\{X_3^T,Y_3^T\} \\ \scriptsize\ydiagram[*(white)]{2,2}*[*(white)]{0,0,1,1}  \times 2 \\ {\mathcal{B},\mathcal{C}}\end{array}} \\
			{X_4 \atop \scriptsize \ydiagram{3}} \quad \star \quad {Y_4 \atop \scriptsize\ydiagram[*(white)]{3}} \quad &= \quad {\begin{array}{c} \{X_4,Y_4\} \\ \scriptsize \ydiagram[*(white)]{3}*[*(white)]{0,3} \\ {\mathcal{D}}\end{array}} \quad , \quad {\begin{array}{c} \{X_4,Y_4^T\},\{X_4^T,Y_4\} \\ \scriptsize\ydiagram[*(white)]{3}*[*(white)]{0,1,1,1} \times 2 \\ {\mathcal{B,E}}\end{array}} \quad , \quad {\begin{array}{c} \{X_4^T,Y_4^T\} \\ \scriptsize\ydiagram[*(white)]{1,1,1}*[*(white)]{0,0,0,1,1,1} \\ {\mathcal{D}} \end{array}}
		\end{align*}
		\caption{Young diagrams and their corresponding canonical pairs for $N = 6$}
		\label{4adhesionsof4pairs}
	\end{figure}
    
    Note that any two multisets $X \star Y$ and $Z \star W$
    are either identical or disjoint. Let $m_\mathcal{S}(A)$ be the {\em multiplicity of $A$ in a multiset $\mathcal{S}$}. Let $\{X,Y\} \in \A$. Since $N$ is not a sum of two squares, we know that if $X = Y^T$, then $X$ and $Y$ are not squares. Hence, $m_\A(\{X,Y\}) \leq 2$. In particular, 
    \[
    m_\mathcal{A}(\{X,Y\}) = \begin{cases}
        1 & \text{if $X$ and $Y$ are not squares and $X \neq Y^T$,} \\
        2 & \text{if $X$ or $Y$ is a square or $X = Y^T$.} 
    \end{cases}
    \]

        \begin{lemma}
        \label{countingmultisets}
			$|\mathcal{A}|=|\mathcal{B}| + |\mathcal{C}| + |\mathcal{D}| + |\mathcal{E}|$. 
		\end{lemma}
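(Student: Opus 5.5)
Since $\B$, $\C$ and $\E$ are sets while $\A$ and $\D$ are multisets, I read $|\B|,|\C|,|\E|$ as the numbers of distinct elements of $\A$ with the given property, and $|\D|$ as the number of elements counted with multiplicity in $\A$. The natural approach is then to count each distinct element $\{X,Y\}$ of $\A$ by its multiplicity, and to check that its contribution to the right-hand side equals $m_\A(\{X,Y\})$:
\[
|\A|=\sum_{\{X,Y\}} m_\A(\{X,Y\}),
\]
where the sum runs over the distinct elements of $\A$. The first observation is that every element lies in exactly one of $\B$ and $\D$, because the column numbers of $X$ and $Y$ are either different or equal. Hence the $\B\cup\D$ part contributes exactly $m_\A(\{X,Y\})$ when the element lies in $\D$, and exactly $1$ when it lies in $\B$.

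I will then split according to the multiplicity formula displayed just before the lemma, which holds because $N$ is not a sum of two squares. There are three cases.
\begin{enumerate}
\item If neither $X$ nor $Y$ is a square and $X\neq Y^T$, then $m_\A=1$. The element does not lie in $\C$ or $\E$, so it contributes $1$ through $\B$ or through $\D$. This matches.
\item If $X$ or $Y$ is a square, then $m_\A=2$. Since $N$ is not a sum of two squares, $X$ and $Y$ are not both squares. Moreover $X\neq Y^T$, because $X=Y^T$ with one of them square would make both square. So the element lies in $\C$ but not in $\E$, and it contributes $1$ from $\C$ plus either $1$ from $\B$ or $m_\A=2$ from $\D$.
\item If $X=Y^T$, with neither a square, then $m_\A=2$. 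Here $X$ and $Y$ have different column numbers: if $X$ is $a\times b$ with $a\neq b$, then $Y=X^T$ is $b\times a$. So the element lies in $\B$ and in $\E$ but not in $\C$, and it contributes $1+1=2$. This matches.
\end{enumerate}

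The main obstacle is case (2) when the element lies in $\D$, where the naive count gives $3$ rather than $2$. Take $X$ square $a\times a$ and $Y$ with $a$ columns. Then both $\{X,Y\}$ and $\{X^T,Y\}$ equal the same pair, which is a rectangle of width $a$. So the right fix is to count the $\D$-contribution only once per distinct rectangle-shaped diagram there, or equivalently to have the $\C$ and $\D$ contributions cover the doubled multiplicity jointly.

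Because of this, I will check the bookkeeping convention against the $N=6$ example in Figure~\ref{4adhesionsof4pairs}, where $|\D|=5$. The $\D$ entries there are the two copies of $\{X_1^T,Y_1\}$, $\{X_2^T,Y_2^T\}$, two copies of $\{X_3,Y_3\}$, $\{X_4,Y_4\}$ and $\{X_4^T,Y_4^T\}$, which gives $7$ with full multiplicity. The stated value $5$ therefore means $\D$ counts distinct pairs. With that reading, every distinct element contributes $1$ from $\B$ or $\D$, plus $1$ if it is in $\C$, plus $1$ if it is in $\E$. In each of the three cases this gives $1$ when $m_\A=1$ and $2$ when $m_\A=2$, because cases (2) and (3) are disjoint and $\C\cap\E=\emptyset$. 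Summing over the distinct elements then proves the lemma. The figure's total $16=6+4+5+1$ serves as a sanity check on this reading.
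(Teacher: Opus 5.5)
Your proof is correct and follows the paper's argument: both check $m_\A(\{X,Y\}) = m_\B + m_\C + m_\D + m_\E$ element by element, and the paper's Case 2 also takes $m_\D(\{X,Y\}) = 1$ for each distinct equal-width pair. That is exactly your corrected reading of $\D$, and it agrees with $|\D| = \sum_{d \mid N} \lfloor d/2 \rfloor$ in the next lemma. Your split by the cases of the multiplicity formula differs only cosmetically from the paper's split into equal versus unequal column counts.
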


        \begin{proof}[Proof of Lemma~\ref{countingmultisets}]

            We will show that for any $\{X,Y\} \in \mathcal{A}$, 
            \[
            m_\mathcal{A}(\{X,Y\}) = m_{\B}(\{X,Y\}) + m_{\C}(\{X,Y\}) + m_{\D}(\{X,Y\}) + m_{\E}(\{X,Y\}). 
            \]
            \begin{itemize}
                \item Case 1: the numbers of columns of $X$ and $Y$ are different. Then $m_\B(\{X,Y\}) = 1$ and $m_\D(\{X,Y\}) = 0$. 
                \begin{itemize}
                    \item Case 1.1: $X \neq Y^T$. Then $m_\E(\{X,Y\}) = 0$. 
                    \begin{itemize}
                        \item Case 1.1.1: $X$ and $Y$ are not squares. We have $m_\A(\{X,Y\}) = 1$ and $m_\C(\{X,Y\}) = 0$. 
                        \item Case 1.1.2: $X$ or $Y$ is a square. We have $m_\A(\{X,Y\}) = 2$ and $m_\C(\{X,Y\}) = 1$. 
                    \end{itemize}
                    \item Case 1.2: $X = Y^T$. Then $m_\A(\{X,Y\}) = 2$ and $m_\E(\{X,Y\}) = 1$. Since $N$ is not a sum of two squares, we have that $X$ and $Y$ both are not squares. Therefore, $m_\C(\{X,Y\}) = 0$. 
                \end{itemize}
                \item Case 2: the numbers of columns of $X$ and $Y$ are the same. Then $m_\B(\{X,Y\}) = 0$ and $m_\D(\{X,Y\}) = 1$. Since $N$ is not a sum of two squares, we know that $X \neq Y^T$. Hence, $m_\E(\{X,Y\}) = 0$. 
                \begin{itemize}
                    \item Case 2.1: $X$ and $Y$ are not squares. Then $m_\A(\{X,Y\}) = 1$ and $m_\C(\{X,Y\}) = 0$. 
                    \item Case 2.2: $X$ or $Y$ is a square. We have $m_\A(\{X,Y\}) = 2$ and $m_\C(\{X,Y\}) = 1$. 
                \end{itemize}
            \end{itemize}
\begin{table}[h!]
\centering
\renewcommand{\arraystretch}{1.5}
\begin{tabular}{|c|cc|c|c|c|c|c|}
\hline
the numbers of columns of $X,Y$ & \multicolumn{2}{c|}{} & $m_\A$ & $m_\B$ & $m_\C$ & $m_\D$ & $m_\E$ \\ \hline\hline
\multirow{3}{*}{different} & \multicolumn{1}{c|}{\multirow{2}{*}{$X \neq Y^T$}} & \begin{tabular}[c]{@{}c@{}}$X,Y$ are \\ not squares\end{tabular} & 1 & 1 & 0 & 0 & 0 \\ \cline{3-8} 
& \multicolumn{1}{c|}{} & \begin{tabular}[c]{@{}c@{}}$X$ or $Y$ is \\ a square\end{tabular} & 2 & 1 & 1 & 0 & 0 \\ \cline{2-8} 
& \multicolumn{2}{c|}{\begin{tabular}[c]{@{}c@{}}$X = Y^T$\\ (then $X,Y$ are not squares)\end{tabular}} & 2 & 1 & 0 & 0 & 1 \\ \hline
\multirow{2}{*}{\begin{tabular}[c]{@{}c@{}}same\\ (then $X \neq Y^T$)\end{tabular}} & \multicolumn{2}{c|}{\begin{tabular}[c]{@{}c@{}}$X,Y$ are \\ not squares\end{tabular}} & 1 & 0 & 0 & 1 & 0 \\ \cline{2-8} 
& \multicolumn{2}{c|}{\begin{tabular}[c]{@{}c@{}}$X$ or $Y$ is \\ a square\end{tabular}} & 2 & 0 & 1 & 1 & 0 \\ \hline
\end{tabular}
\caption{The multiplicities of $\{X,Y\}$ in $\A, \B, \C, \D$, and $\E$}
\end{table}
Therefore, $|\A| = |\B| + |\C| + |\D| + |\E|$. 
\end{proof}

        
        
        We then determine the sizes of $\mathcal{B}$, $\mathcal{C}$, $\mathcal{D}$, and $\mathcal{E}$.
		

        \begin{lemma}
        \label{sizeBCDE}
            Assume that a natural number $N$ is not a sum of two squares. Then, 

            \begin{align*}
			|\mathcal{B}| &= \nu_2(N), \\
			|\mathcal{C}| &= \sum_{1 \leq k < \sqrt{N}} \sigma_0(N - k^2), \\
			|\mathcal{D}| &= \sum_{d \mid N} \left\lfloor \frac{d}{2} \right\rfloor, \\
			|\mathcal{E}| &= \frac{1}{2} \sum_{d \mid \frac{N}{2}} 1. 
		\end{align*}
        
        \end{lemma}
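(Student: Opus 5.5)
The plan is to reduce all four counts to one description of the underlying set of $\A$, and then count each of $\B$, $\C$, $\D$, $\E$ directly in terms of rectangle dimensions. Throughout, I follow the convention of the proof of Lemma~\ref{countingmultisets}: $\B,\C,\D,\E$ are counted by their distinct elements $\{X,Y\}$, each with multiplicity $m_\B,\dots,m_\E\in\{0,1\}$.

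\textbf{Step 1 (underlying set of $\A$).} I will first show that the distinct elements of $\A$ are exactly the unordered pairs $\{X,Y\}$ of rectangles, in either orientation, with total area $N$. One inclusion is clear. For the other, given any such pair, replace each rectangle by its orientation with rows $\le$ columns. This gives a canonical pair $\{X_0,Y_0\}$, and the original pair is one of the four members of $X_0\star Y_0$. I will also record two consequences of $N$ not being a sum of two squares.
\begin{itemize}
\item Taking $Y$ empty excluded, $N$ is not a perfect square, since $N=k^2+0^2$.
\item No pair consists of two squares.
\item No pair satisfies $X=Y^T$ with $X$ square, since then $N=2a^2=a^2+a^2$.
\end{itemize}

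\textbf{Step 2 ($\B$).} An element with different column counts glues, widest on top, to an L-shaped diagram with exactly two part sizes. Conversely, every such diagram splits uniquely into two rectangles, and that pair lies in $\A$ by Step~1. Distinct pairs give distinct diagrams, so this is a bijection and $|\B|=\nu_2(N)$.

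\textbf{Step 3 ($\C$).} A pair in $\C$ has exactly one square $k\times k$, because two squares are impossible. Here $1\le k$ and $k^2<N$, strictly, since $N$ is not a square. The other rectangle has area $N-k^2$ and can be any $a\times b$ with $ab=N-k^2$, counted with orientation, giving $\sigma_0(N-k^2)$ choices. Different choices of $(k,a,b)$ give different unordered pairs, so $|\C|=\sum_{1\le k<\sqrt N}\sigma_0(N-k^2)$.

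\textbf{Step 4 ($\D$).} If both rectangles have $c$ columns, they are $r_1\times c$ and $r_2\times c$ with $r_1+r_2=N/c$. So $c\mid N$, and the unordered pair is determined by $c$ together with the multiset $\{r_1,r_2\}$ of positive integers. Writing $d=N/c$, the number of unordered ways to split $d$ into two positive parts is $\lfloor d/2\rfloor$. Summing over $c\mid N$, equivalently over $d\mid N$, gives $|\D|=\sum_{d\mid N}\lfloor d/2\rfloor$.

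\textbf{Step 5 ($\E$).} A pair in $\E$ is $\{X,X^T\}$ with $X$ an $a\times b$ rectangle and $2ab=N$. If $N$ is odd, both sides of the claimed formula vanish, with the empty sum read as $0$. Otherwise the ordered solutions $(a,b)$ of $ab=N/2$ number $\sigma_0(N/2)=\sum_{d\mid N/2}1$. By Step~1 none has $a=b$, so $(a,b)$ and $(b,a)$ give the same unordered pair and are distinct. Hence $|\E|=\tfrac12\sum_{d\mid N/2}1$.

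The main obstacle is the bookkeeping in Step~1: showing that $\A$, built only from canonical pairs, nevertheless hits every oriented pair of rectangles, and that the four sets are counted without multiplicity, consistently with the table in Lemma~\ref{countingmultisets}. Once that is settled, Steps 2–5 are direct divisor counts.
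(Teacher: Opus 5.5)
Your proposal is correct and follows the paper's proof. You use the same direct counts by rectangle dimensions: one $k\times k$ square plus an ordered factorization of $N-k^2$ for $\C$, a common column count $N/d$ with $\lfloor d/2\rfloor$ unordered row splits for $\D$, and the $2$-to-$1$ pairing $d\leftrightarrow N/(2d)$ for $\E$; your Step~1 and injectivity checks make explicit what the paper leaves implicit. One small remark: in Step~3, $k^2<N$ holds simply because the other rectangle is nonempty, so you need not invoke that $N$ is not a perfect square (which depends on whether $0^2$ counts as a square).
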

		
		\begin{proof}[Proof of Lemma~\ref{sizeBCDE}] 
            It is obvious that $|\B|$ is $\nu_2(N)$, the number of partitions of exactly two part sizes. Denote by $R(m,n)$ the rectangle with $m$ rows and $n$ columns. For each $\{X,Y\} \in \C$, we may assume that $x = R(k,k)$ for some positive integer $k < \sqrt{N}$. Hence, the number of rectangles $Y$ is the number of all positive divisors of $N - k^2$, which is $\sigma_0(N-k^2)$. Thus, 
            \[
            |\C| = \sum_{1 \leq k < \sqrt{N}} \sigma_0(N - k^2). 
            \]
            By the definition of $\D$, for each $\{X,Y\} \in \D$, we may assume that the number of rows of $X$ is greater than or equal to the number of rows of $Y$. Hence, $X = R(d-i, \frac{N}{d}), Y = R(i, \frac{N}{d})$ for some $d$ and $i$ with $d \mid N$ and $1 \leq i \leq \lfloor \frac{d}{2} \rfloor$. Therefore, 
            \[
            |\D| = \sum_{d \mid N} \left\lfloor \frac{d}{2} \right\rfloor. 
            \]
            For each $\{X,Y\} \in E$, we have that $X = X_d = R(d, \frac{N}{2d})$ and $Y = X_d^T$, for some positive integer $d \mid \frac{N}{2}$. Note that $\{X_d,X_d^T\} = \{X_{N/2d},X_{N/2d}^T\}$ and $d \neq \frac{N}{2d}$. We have 
            \[
            |\E| = \frac{1}{2}\sum_{d \mid \frac{N}{2}} 1. \qedhere
            \]
        \end{proof}

        Also note further that if $N$ is $2m$ for some odd $m$ and observe that $\sigma_0(n)=\sum_{d \mid n} 1$ and $\sigma_1(n)=\sum_{d \mid n} d$, we can simplify $|\mathcal{D}|$ and $|\mathcal{E}|$ as  
		\begin{align*}
			|\mathcal{D}| = \sum_{d \mid N} \left\lfloor \frac{d}{2} \right\rfloor &= \sum_{d \mid N, 2 \nmid d} \frac{d-1}{2} + \sum_{d \mid N, 2 \mid d} \frac{d}{2} \\
			&= \sum\limits_{d \mid N, 2\nmid d} \frac{d}{2}+\sum\limits_{d \mid N, 2 \mid d} \frac{d}{2}-\dfrac{1}{2}\sum\limits_{d \mid N,2 \nmid d}1 \\
			&= \frac{1}{2}\sigma_1(N) - \frac{1}{2}\sigma_0\left(\frac{N}{2}\right), \\
			|\mathcal{E}| = \frac{1}{2}\sum\limits_{d \mid \frac{N}{2}} 1&= \frac{1}{2}\sigma_0\left(\frac{N}{2}\right). 
		\end{align*}


        \begin{proof}[Proof of Theorem~\ref{doublecount}]
		By Lemma~\ref{countingmultisets} and Lemma~\ref{sizeBCDE}, we have 
		\[
		|\A| = |\mathcal{B}| + |\mathcal{C}| + |\mathcal{D}| + |\mathcal{E}| = \nu_2(N) + \sum_{1 \leq k < \sqrt{N}} \sigma_0(N-k^2) + \frac{1}{2}\sigma_1(N). 
		\]
        By the definition of a $\A$, it is clear that $|\mathcal{A}| \equiv 0 \pmod 4$. 
		\end{proof}


        
		We are now ready to prove Theorem~\ref{thm:sat}.
		\begin{proof}[Proof of Theorem~\ref{thm:sat}]
		Assume $N$ satisfies the conditions as in Theorem~\ref{thm:sat}. Notice that these choices of $N=An+B$ cannot be written as a sum of two squares (which can be proven by evaluating all possible sums of two squares in modulo $A$).  From Theorem~\ref{doublecount}, it provides that
		$$\nu_2(N)+\sum\limits_{1 \leq k < \sqrt{N}} \sigma_0(N-k^2)+\frac{1}{2}\sigma_1(N) \equiv 0 \pmod4.$$
		Due to the fact that these choices of $N$ and~\cite{keith2017partitions} imply $\nu_2(N)\equiv0\pmod4$, it suffices to show that the sum of divisors of $N$ is divisible by 8.
		
		For the case $N=16n+14$, assume that
		$$N=2\prod_{\alpha=1}^{m_1}p_{1\alpha} \prod_{\beta=1}^{m_3}p_{3\beta} \prod_{\gamma=1}^{m_5}p_{5\gamma} \prod_{\delta=1}^{m_7}p_{7\delta}$$
		where $p_{ri}$ is a prime factor of $N$ which is congruent to $r$ in modulo 8 for all $r=1,3,5,7$.
		Now, consider the fact that
		\[
		7\equiv N/2 \equiv 1^{m_1}\cdot3^{m_3}\cdot5^{m_5}\cdot7^{m_7}\equiv3^{m_3}\cdot5^{m_5}\cdot7^{m_7}\pmod8.
		\]
        Since $3^2\equiv5^2\equiv7^2\equiv1\pmod8$, we can compute as follows all parity possibilities for $m_3, m_5$, and $m_7$.
		\[
		\begin{split}
			3^0 5^0 7^0 \equiv 1 \pmod8, \hspace{5mm}
			& 3^1 5^0 7^0 \equiv 3 \pmod8,\\
			3^0 5^1 7^0 \equiv 5 \pmod8, \hspace{5mm}
			& 3^1 5^1 7^0 \equiv 7 \pmod8, \\
			3^0 5^0 7^1 \equiv 7 \pmod8, \hspace{5mm}
			& 3^1 5^0 7^1 \equiv 5 \pmod8, \\
			3^0 5^1 7^1 \equiv 3 \pmod8, \hspace{5mm}
			& 3^1 5^1 7^1 \equiv 1 \pmod8.
		\end{split}
		\]
        
		
		For the case where $m_7$ is odd, there exists a prime factor $p$ of $N$ and a positive odd integer $a$ such that $N$ is divisible by $p^a$ but not by $p^{a+1}$, and the remainder of $p$ divided by 8 is 7. Notice that $\sigma_1(N)$ is divisible by $1+p+p^2+\cdots+p^{a}$, and
		$$1+p+p^2+\cdots+p^{a}\equiv1+7+7^2+\cdots+7^{a}\equiv0\pmod8.$$
		Thus, the quantity $\sigma_1(N)$ is divisible by 8.
		
		For the case where $m_7$ is even ($m_3$ and $m_5$ are odd), there exist prime factors $p$ and $q$ of $N$ and positive odd integers $a$ and $b$ such that $N$ is divisible by $p^a$ and $q^b$ but not by $p^{a+1}$ and $q^{b+1}$, and the remainders of $p$ and $q$ divided by 8 are 3 and 5 respectively. Since $\sigma_1(N)$ is divisible by $(1+p+p^2+\cdots+p^{a})(1+q+q^2+\cdots+q^{b})$, and
		\[\begin{split}
			1+p+p^2+\cdots+p^{a}\equiv1+3+3^2+\cdots+3^{a}\equiv0\pmod4,\\
			1+q+q^2+\cdots+q^{b}\equiv1+5+5^2+\cdots+5^{b}\equiv0\pmod2,
		\end{split}\]
		the quantity $\sigma_1(N)$ is also divisible by 8. Hence, for $N=16n+14$, we have that $\sigma_1(N)$ is also divisible by 8.
		
		For the case $N=6k$ for some positive integer $k$ such that $k$ is not a square and $k$ and 6 are co-prime (this includes the cases when $(A,B)\in\{(36,30),(72,42),(252,114)\}$), since $k$ is not a square, there exists a prime factor $p$ of $k$ (which is also a prime factor of $N$) and a positive odd integer $a$ such that $k$ is divisible by $p^a$ but not $p^{a+1}$. Since $3p^{a}$ is a factor of $N$ such that $N$ is not divisible by $3^2$ and $p^{a+1}$, the quantity $\sigma_1(N)$ is divisible by
		$$(1+3)(1+p+p^2+\cdots+p^{a})=4(1+p+p^2+\cdots+p^{a}).$$
		Notice that
		$$1+p+p^2+\cdots+p^{a}\equiv1+1^1+1^2+\cdots+1^{a}\equiv0\pmod2.$$
		Thus, the quantity $\sigma_1(N)$ is divisible by 8.
		
		For the case $N=196n+70$, we have $N=7(28n+10)$. Notice that 7 and $28n+10$ are co-prime. Thus, we get that $N$ is divisible by 7 but not $7^2$. So, the quantity $\sigma_1(N)$ is divisible by $1+7=8$.
        \end{proof}
		
		\section{Proofs of Corollaries}
		\label{sec:otherresults}

		\begin{proof}[Proof of Corollary~\ref{corolldiv2}]
			Consider 
			\begin{align*}
				\sum\limits_{1\leq k<\sqrt{N}} \sigma_0(N-k^2) &= \sum_{\substack{1\leq k<\sqrt{N}\\k\text{ is odd}}} \sigma_0(N-k^2) +  \sum_{\substack{1\leq k<\sqrt{N}\\k\text{ is even}}} \sigma_0 (N - k^2).
			\end{align*}
			For the case $N\equiv 14\pmod{16}$, let $n$ be a nonnegative integer such that $N=16n+14$ and $l$ be an integer. Consider the quantity $N-(2l)^2=16n+14-4l^2=2(8n+7-2l^2)$.  Observe that any square of an integer is congruent to $0,1,$ or $4$ mod $8$. Therefore, $8n+7-2l^2$ is congruent to $5$ or $7$ mod $8$. Hence, $8n+7-2l^2$ is odd and cannot be a square. Thus, the number of divisors of $8n+7-l^2$ is even, and the number of divisors of $N-(2l)^2$ is
			$$\sigma_0(N-(2l)^2)=2\sigma_0(8n+7-l^2)\equiv 0\pmod4.$$
			For the case $N\equiv 70\pmod{196}$, let $m$ be a nonnegative integer such that $N=196m+70$ and $r$ be an integer. Consider the quantity $N-(2r)^2=196m+70-4r^2=2(98m+35-2r^2)$. Suppose that $98m+35-2r^2$ is a square. Observe that any square is congruent to $0,1,2,$ and $4$ mod $7$. Thus, $-2r^2$ is congruent to one of $0,3,5,$ and $6$ mod $7$. Since $98m+35-2r^2$ is a square, $r$ is congruent to $0$ mod $7$. Let $s$ be an integer such that $r=7s$. Then we get that
			$$98m+35-2r^2=98m+35-2(7s)^2=7(14m+5-14s^2).$$
			Notice that $98m+35-2r^2$ is divisible by $7$ but not by $7^2$. Hence, $98m+35-2r^2$ is not a square, leading to a contradiction. Therefore, $98m+35-2r^2$ is not a square and also odd. Thus, the number of divisors of $98m+35-2r^2$ is even, and the number of divisors of $N-(2r)^2$ is
			$$\sigma_0(N-(2l)^2)=2\sigma_0(98m+35-2r^2)\equiv 0\pmod4.$$
			From these two cases, we get that $\sigma_0(N-(2l)^2)$ is divisible by $4$, and
			\[\begin{split}
				\sum\limits_{1\leq k<\sqrt{N}} \sigma_0(N-k^2)
				&= \sum_{\substack{1\leq k<\sqrt{N}\\k\text{ is odd}}} \sigma_0(N-k^2) +  \sum_{\substack{1\leq k<\sqrt{N}\\k\text{ is even}}} \sigma_0 (N - k^2),\\
				&\equiv \sum_{\substack{1\leq k<\sqrt{N}\\k\text{ is odd}}} \sigma_0(N-k^2)\pmod4.
			\end{split}\]
			Due to the fact that $N$ cannot be written as a sum of two squares, the number of divisors of $N-k^2$ is even for all integer $k$. By Theorem~\ref{thm:sat}, we get that the amount of odd positive integer $k$ such that $k^2<N$ and $\sigma_0 (N - k^2) \equiv 2 \pmod 4$ is even.
		\end{proof}
		
		\begin{proof}[Proof of Corollary~\ref{corolldiv3}]
			Observe that $N=6n$ for some integer $n$ such that $6$ and $n$ are co-prime. We will show that $\sigma_0(N - (3a)^2)$ is a multiple of $4$ for all integer $a$ with $N > (3a)^2$. Suppose that there exist positive integers $a$ and $b$ such that $b = N - (3a)^2$ and $\sigma_0(b)$ is not a multiple of $4$. Since $N$ is divisible by $3$ but not by $3^2$, there exists a positive integer $b'$ such that $b=3b'$ and both $3$ and $b'$ are co-prime. Notice that $\sigma_0(b)=\sigma_0(3b')=2\sigma_0(b')$ is not a multiple of $4$. So, $\sigma_0(b')$ is odd, meaning that there exists an integer $c$ such that $b'=c^2$. Thus, we get that $6n=N=(3a)^2+b=9a^2+3c^2$, implying that $2n=3a^2+c^2$. Since $6$ and $n$ are co-prime, we have $3a^2+c^2=2n\equiv2\pmod4$, contradicting that $3a^2+c^2$ is congruent to one of $0,1,$ and $3$ mod $4$. Hence, if $N>(3a)^2$, then $\sigma_0(N-(3a)^2)$ is a multiple of $4$ for all integer $a$. Therefore,
			$$\sum\limits_{\substack{1\leq k<\sqrt{N}\\3\nmid k}} \sigma_0(N-k^2) \equiv\sum\limits_{1\leq k<\sqrt{N}} \sigma_0(N-k^2) \equiv 0 \pmod4$$
			by Theorem~\ref{thm:sat}.
		\end{proof}

		\section{Concluding Remarks}
		\label{sec:conclude}
		
		As shown in the proof of our main result, Theorem~\ref{thm:sat}, the theorem currently requires rather restrictive conditions on $N$, i.e., $N = An + B$ where $(A,B)$ belongs to the set $$\{ (16,14), (36,30), (72,42), (196,70), (252,114) \}.$$ Although this constraint is sufficient for our argument, we performed computational experiments for small values of $N$ (less than 100,000) which suggests a consistent trend indicating that the theorem may also hold if $N = 8n+6$. We can see that the congruence class $N \equiv 6 \pmod 8$ has the same structural features as the specific pairs $(A,B)$ appearing in Theorem~\ref{thm:sat} in the sense that any such $N$ is automatically not a sum of two squares, since an integer of the form $a^2 + b^2$ cannot be congruent to $6$ mod $8$, and $N = 2m$ for some odd integer $m$. However, the pair $(A,B) = (8,6)$ differs from the other pairs because there exist integers $N = 8n+6$ such that $\frac{1}{2}\sigma_1(N) \equiv 2 \pmod 4$, and $\nu_2(N) \equiv 2 \pmod 4$. This does not happen when $n$ is an odd number, but $n$ being odd means $n = 2m+1$ for some $m$ and $N = 8(2m+1) + 6 = 16m+14$ which corresponds to the pair $(A,B) = (16,14)$ in Theorem~\ref{thm:sat}. Therefore, it remains to consider the case when $n$ is even, which we believe satisfies the desired property. 
		
		\begin{conjecture}
			Let $N = 16n+6$ for some nonnegative integer $n$. Then 
			\[
			\sum_{1 \leq k < \sqrt{N}} \sigma_0(N - k^2) \equiv 0 \pmod 4. 
			\]
		\end{conjecture}
		
		Another computational observation is the following: among the pairs $(A,B)$ for which 
		\[
		\sum_{1 \leq k < \sqrt{N}} \sigma_0(N-k^2) \equiv 0 \pmod 4
		\]
		for all $N = An + B$ in our search range, the coefficient $A$ (less than $1,000$) always appears to be divisible by $4$, and the coefficient $B$ is always divisible by $2$ but not $4$. 
        However, this phenomenon is not explained by our approach in this work or in previous mentioned works. We make another conjecture. 
		
		\begin{conjecture}
			Let $A$ and $B$ be nonnegative integers such that $0\leq B<A$. If $N=An+B$ satisfies
			\[
			\sum_{1 \leq k < \sqrt{N}} \sigma_0(N - k^2) \equiv 0 \pmod 4 
			\]
			for all nonnegative integer $n$ then $A \equiv 0 \pmod 4$ and $B \equiv 2 \pmod 4$.
		\end{conjecture}
		
		\section*{Acknowledgement}
		
		We would like to thank Chatchawan Panraksa, Napon Apintanaphong, and Kornchawan Tantivisethsak for helpful discussion at the Geometry and Combinatorics Boot Camp 2025, Bangkok, Thailand. We are also grateful to the Department of Mathematics and Computer Science, Faculty of Science, Chulalongkorn University for hosting the boot camp, which provided the opportunity for us to meet and collaborate. 
		
		\bibliographystyle{siam}
		\bibliography{partsat}

	\end{document}